 \newtheorem{thm}{Theorem}[section]
 \newtheorem{lem}[thm]{Lemma}
 \theoremstyle{definition}
 \newtheorem{defn}[thm]{Definition}
 \theoremstyle{remark}
 \newtheorem{rem}[thm]{Remark}
 \numberwithin{equation}{section}
\newcommand{\ccomma}{\mathpunct{\raisebox{0.5ex}{,}}}
\newcommand{\sigmal}{\text{\LARGE\(\sigma\)}}
\newcommand{\taul}{\text{\LARGE\(\tau\)}}
\newcommand{\sigmas}{\text{\small\(\sigma\)}}
\newcommand{\taus}{\text{\small\(\tau\)}}
\newcommand{\taup}{\text{\Large\(\tau\)}}
\DeclareMathOperator{\tr}{trace}
\begin{document}

%
%
%
%
%

\title[On the BMV conjecture for \(2\times 2\) matrices ]
 {On the BMV conjecture for \(\boldsymbol{2\times 2}\) matrices\\
 and the exponential convexity\\
  of the function \(\boldsymbol{\cosh(\sqrt{at^2+b})}\)}
\author[V. Katsnelson]{Victor Katsnelson}
\address{%
Department of Mathematics\\
The Weizmann Institute\\
76100, Rehovot\\
Israel}
\email{victor.katsnelson@weizmann.ac.il; victorkatsnelson@gmail.com}
\subjclass{Primary 15A15,15A16; Secondary 30F10,44A10}
\keywords{ BMV conjecture, absolutely monotonic functions, exponentially convex functions, positive definite functions}
\date{October 1, 2014}
\begin{abstract}
The BMV conjecture states that for \(n\times n\) Hermitian matrices \(A\) and \(B\)
the function \(f_{A,B}(t)=\tr e^{tA+B}\) is exponentially convex.
Recently the BMV conjecture was proved by Herbert Stahl. The proof of Herbert Stahl is based
on ingenious considerations related to Riemann surfaces of algebraic functions.
In the present paper we give a purely "matrix" proof of the BMV conjecture for
\(2\times2\) matrices. This proof is based on the Lie product formula
for the exponential of the sum of two matrices.
The proof also uses the commutation relations for the Pauli matrices and does not
use anything else.
\end{abstract}
\maketitle
 \section{Herbert Stahl's Theorem.}
In the paper \cite{BMV} a conjecture was
formulated which now is commonly known as the BMV conjecture:\\[1.0ex]
\textbf{The BMV Conjecture.} Let \(A\) and \(B\) be Hermitian matrices of size
\(n\times{}n\).
Then the function
\begin{equation}
\label{TrF}
f_{A,B}(t)=
\textup{trace}\,\{\exp[tA+B]\}
\end{equation}
of the variable \(t\) is representable as a bilateral Laplace transform of a \textsf{non-negative} measure
\(d\sigma_{A,B}(\lambda)\) compactly supported on the real axis:
\begin{equation}
\label{LaR}
f_{A,B}(t)=\!\!\int\limits_{\lambda\in(-\infty,\infty)}\!\!\exp(t\lambda)\,d\sigma_{A,B}(\lambda), \ \ \forall \,t\in(-\infty,\infty).
\end{equation}

\begin{defn}
\label{ETF }
Let \(A,B\) be a pair of square  matrices of the same size \(n\times n\). The function \(f_{A,B}(t)\) of the variable \(t\in\mathbb{R}\) defined by
\eqref{TrF} is said to be \emph{the trace-exponential function generated
by the pair \(A,\,B\).}
\end{defn}

{\ }\\[3.0ex]
Let us note that the function \(f_{A,B}(t)\), considered for \(t\in\mathbb{C}\), is an entire function of exponential type. The indicator diagram of the function \(f_{A,B}\) is
the closed interval \([\lambda_{\min},\lambda_{\max}]\), where \(\lambda_{\min}\) and
\(\lambda_{\max}\) are the least and the greatest eigenvalues of the matrix \(A\) respectively.
Thus if the function \(f_{A,B}(t)\) is representable in the form \eqref{LaR} with a non-negative
measure \(d\sigma_{A,B}(\lambda)\), then \(d\sigma_{A,B}(\lambda)\) is actually supported on the interval
\([\lambda_{\min},\lambda_{\max}]\) and the representation
\begin{equation}
\label{LaRm}
f_{A,B}(t)=
\hspace*{-8.0pt}\int\limits_{\lambda\in[\lambda_{\min},\lambda_{\max}]}\hspace*{-10.0pt}
\exp(t\lambda)\,\,d\sigma_{A,B}(\lambda), \ \ \forall \,t\in\mathbb{C},
\end{equation}
holds for every \(t\in\mathbb{C}\).

The representability of the function \(f_{A,B}(t)\), \eqref{TrF}, in the form \eqref{LaRm}
with a non-negative \(d\sigma_{A,B}\) is evident if the matrices \(A\) and \(B\) commute.
In this case \(d\sigma(\lambda)\) is an atomic measure supported on the spectrum of the matrix \(A\).
In general case, if the matrices \(A\) and \(B\) do not commute, the BMV conjecture remained an
open question for longer than 35 years. In 2011, Herbert
Stahl gave an affirmative answer to the BMV conjecture.\\[1.0ex]
\textbf{Theorem}\,(H.Stahl) \textit{Let \(A\) and \(B\) be \(n\times{}n\) hermitian matrices.}
\textit{Then the function \(f_{A,B}(t)\) defined by \eqref{TrF} is representable
as the bilateral Laplace transform \eqref{LaRm} of a non-negative measure \(d\sigma_{A,B}(\lambda)\)
supported on the closed interval \([\lambda_{\min},\lambda_{\max}]\).}

The first arXiv version of H.Stahl's Theorem appeared in \cite{S1}, the latest arXiv version -
in \cite{S2}, the journal publication - in \cite{S3}.
The proof of Herbert Stahl is based on ingenious considerations related to
Riemann surfaces of algebraic functions. In \cite{E1},\cite{E2} a simplified version of the Herbert Stahl proof is presented.

In the present paper we focus on the BMV conjecture for \(2\times 2\) matrices.
In this special case the BMV conjecture was proved in \cite[section 2]{MK}
using a perturbation series. We give a purely "matrix" proof of the BMV conjecture for
\(2\times2\) matrices.

\section{Exponentially convex functions.}

 \begin{defn}
 \label{decf}
A function \(f\) on \(\mathbb{R}\), \(f:\,\mathbb{R}\to[0,\infty)\), is said to be \emph{exponentially convex}
if
\begin{enumerate}
\item[\textup{1}.]
 For every nonnegative integer \(N\), for every choice of real numbers \(t_1\), \(t_2\),\(\,\ldots\,\), \(t_{N}\), and complex numbers
\(\xi_1\), \(\xi_2, \,\ldots\,, \xi_{N}\), the inequality holds
\begin{equation}
\label{pqf}
\sum\limits_{r,s=1}^{N}f(t_r+t_s)\xi_r\overline{\xi_s}\geq 0;
\end{equation}
\item[\textup{2}.]
The function \(f\) is continuous on \(\mathbb{R}\).
\end{enumerate}
\end{defn}
The class of exponentially convex functions was introduced by S.N.Bernstein, \cite{Ber1}, see \S 15 there.

From \eqref{pqf} it follows that
the inequality
\begin{math}
f(t_1+t_2)\leq\sqrt{f(2t_1)f(2t_2)}
\end{math}
holds for every \(t_1\in\mathbb{R},t_2\in\mathbb{R}\). Thus the alternative takes place: \\
\textit{If \(f\) is an exponentially convex function, then either \(f(t)\equiv 0\), or \(f(t)>0\) for every \(t\in\mathbb{R}\).}

\noindent
\begin{center}
\textbf{Properties of the class of exponentially convex functions.}
\end{center}
\begin{enumerate}
\item[\textup{P\,1}.] If \(f(t)\) is an exponentially convex function and \(c\geq0\) is a nonnegative constant, then the function \(cf(t)\) is exponentially convex.
\item[\textup{P\,2}.] If \(f_1(t)\) and \(f_2(t)\) are exponentially convex functions, then their sum
\(f_1(t)+f_2(t)\) is exponentially convex.
\item[\textup{P\,3}.] If \(f_1(t)\) and \(f_2(t)\) are exponentially convex functions, then their product
\(f_1(t)\cdot f_2(t)\) is exponentially convex.
\item[\textup{P\,4}.] If \(f(t)\) is an exponentially convex function and
\(a,\,b\) are real numbers, then the function \(f(at+b)\) is exponentially
convex.
\item[\textup{P\,5}.] Let \(\lbrace f_{n}(t)\rbrace_{1\leq n<\infty}\) be a sequence of exponentially
convex functions. We assume that for each \(t\in\mathbb{R}\) there exists the limit
\(f(t)=\lim_{n\to\infty}f_{n}(t)\), and that \(f(t)<\infty\ \forall t\in\mathbb{R}\).
Then the limiting function \(f(t)\) is exponentially convex.
\end{enumerate}

From the functional equation for the exponential function
it follows that for each real number \(\mu\), for every choice of real numbers \(t_1,t_2,\,\ldots\,\), \(t_{N}\) and complex numbers
\(\xi_1\), \(\xi_2, \,\ldots\,, \xi_{N}\), the equality holds
\begin{equation}
\label{ece}
\sum\limits_{r,s=1}^{N}e^{(t_r+t_s)\mu}\xi_r\overline{\xi_s}=
\bigg|\sum\limits_{p=0}^{N-1}e^{t_p\mu}\xi_p\,\bigg|^{\,2}\geq 0.
\end{equation}
The relation \eqref{ece} can be formulated as
\begin{lem}
\label{ECE}
For each real number \(\mu\), the function \(e^{t\mu}\) of the variable \(t\) is exponentially convex.
\end{lem}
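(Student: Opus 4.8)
The plan is to verify directly that $f(t)=e^{t\mu}$ satisfies the two conditions of Definition \ref{decf}. Condition~2, the continuity of $e^{t\mu}$ on $\mathbb{R}$, is immediate, so the entire content of the statement lies in Condition~1, the positive semidefiniteness inequality \eqref{pqf}. I would treat the two conditions separately, dispatching continuity in one line and concentrating on the quadratic form.

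For Condition~1 the idea is to exploit the functional equation of the exponential, $e^{(t_r+t_s)\mu}=e^{t_r\mu}\,e^{t_s\mu}$, in order to factor the quadratic form. Substituting this identity into the double sum $\sum_{r,s=1}^{N}f(t_r+t_s)\xi_r\overline{\xi_s}$ separates the $r$- and $s$-dependence, so that the sum collapses into the product $\bigl(\sum_{r}e^{t_r\mu}\xi_r\bigr)\bigl(\sum_{s}\overline{e^{t_s\mu}\xi_s}\bigr)$, which equals $\bigl|\sum_{r}e^{t_r\mu}\xi_r\bigr|^{2}$. This is exactly the identity \eqref{ece} recorded just above the statement, and being a modulus squared it is manifestly nonnegative for every choice of real numbers $t_1,\dots,t_N$ and complex numbers $\xi_1,\dots,\xi_N$.

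There is in fact no serious obstacle here: the whole lemma amounts to the observation that the kernel $e^{(t+s)\mu}$ is a rank-one positive kernel, a fact realized concretely by the factorization in \eqref{ece}. The only point requiring care is the bookkeeping of the conjugation, namely checking that the $s$-summation genuinely produces $\overline{\xi_s}$ and hence the complex conjugate of the $r$-summation, so that the product is a true modulus squared rather than an ordinary square. Once \eqref{ece} is established, both defining conditions of Definition \ref{decf} hold and the proof is complete.
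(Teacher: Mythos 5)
Your proposal is correct and coincides with the paper's own argument: the paper likewise dispatches continuity as immediate and establishes the inequality \eqref{pqf} by using the functional equation of the exponential to factor the quadratic form into the modulus squared \(\bigl|\sum_{p}e^{t_p\mu}\xi_p\bigr|^{2}\), which is exactly the identity \eqref{ece}. Nothing further is needed.
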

For \(z\in\mathbb{C}\), the function \(\cosh z\), which is called \emph{the hyperbolic cosine of \(z\)}, is defined as
\begin{equation}
\label{dch}
\cosh z =\frac{1}{2}\big(e^z+e^{-z}\big).
\end{equation}
From Lemma \ref{ECE} and property P\,2 we obtain
\begin{lem}
\label{echc}
For each real \(\mu\), the function \(\cosh(t\,\mu)\) of the variable \(t\) is exponentially convex.
\end{lem}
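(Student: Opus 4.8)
The plan is to reduce the statement directly to the definition of \(\cosh\) in \eqref{dch} together with Lemma \ref{ECE} and the closure properties P\,1 and P\,2. Since every ingredient is already in hand, I expect the argument to be essentially a one-line verification rather than anything requiring a genuine new idea; the whole point is to package the previously established facts.

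First I would record that, evaluating the defining formula \eqref{dch} at \(z=t\mu\),
\[
\cosh(t\mu)=\frac{1}{2}\bigl(e^{t\mu}+e^{-t\mu}\bigr)=\frac{1}{2}\bigl(e^{t\mu}+e^{t(-\mu)}\bigr),
\]
where the last rewriting is the key (if trivial) observation: the second summand is the exponential function \(e^{t\nu}\) with exponent parameter \(\nu=-\mu\), which is again a \emph{real} number. Next I would apply Lemma \ref{ECE} twice: once with the real parameter \(\mu\), showing that \(e^{t\mu}\) is exponentially convex, and once with the real parameter \(-\mu\), showing that \(e^{t(-\mu)}\) is exponentially convex. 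By property P\,2 their sum \(e^{t\mu}+e^{t(-\mu)}\) is exponentially convex, and by property P\,1 with the nonnegative constant \(c=\frac12\) the function \(\frac12\bigl(e^{t\mu}+e^{t(-\mu)}\bigr)\), which equals \(\cosh(t\mu)\), is exponentially convex.

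The only point to watch is that one must invoke Lemma \ref{ECE} for both \(\mu\) and \(-\mu\), not merely for \(\mu\); this is legitimate precisely because the class of admissible exponents in Lemma \ref{ECE} is all of \(\mathbb{R}\). There is accordingly no real obstacle here. I note as well that the continuity requirement in part 2 of Definition \ref{decf} is immediate for \(\cosh(t\mu)\) and is in any case already subsumed in the closure properties P\,1 and P\,2, since those operate within the class of exponentially convex functions (which by definition are continuous), so no separate verification of continuity is needed.
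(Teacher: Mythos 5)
Your proof is correct and follows exactly the paper's route: the paper derives Lemma \ref{echc} directly from Lemma \ref{ECE} and property P\,2, just as you do. Your only addition is the explicit invocation of P\,1 for the factor \(\tfrac12\) (which the paper leaves tacit), and that detail is handled correctly.
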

The following result is well known.

\newpage

\begin{thm}[The representation theorem]\label{RepTe} {\ }\\[-2.0ex]
\begin{enumerate}
\item[\textup{1}.]
Let \(\sigma(d\mu)\) be a nonnegative measure on the real axis,
and let the function \(f(t)\) be defined as the two-sided Laplace transform of the measure
 \(\sigma(d\mu)\):
\begin{equation}
\label{rep}
f(t)=\int\limits_{\mu\in\mathbb{R}}e^{t\mu}\,\sigma(d\mu),
\end{equation}
where the integral in the right hand side of \eqref{rep} is finite for any \(t\in\mathbb{R}\). Then the function \(f\) is exponentially convex.
 \item[\textup{2}. ] Let \(f(t)\) be an exponentially convex function. Then this function \(f\) can be
  represented on \(\mathbb{R}\)  as a two-sided Laplace transform \eqref{rep} of a nonnegative measure \(\sigma(d\mu)\).  \textup{(}In particular, the integral in the right
     hand side of \eqref{rep} is finite for any \(t\in\mathbb{R}\).\textup{)} The representing measure \(\sigma(d\mu)\) is unique.
 \end{enumerate}
\end{thm}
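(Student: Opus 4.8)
The plan is to prove the two directions of Theorem~\ref{RepTe} essentially independently, since direction~1 is a soft closure argument and direction~2 is the substantive representation result.

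For direction~1, I would argue that $f$ is exponentially convex by exhibiting it as a limit of manifestly exponentially convex functions. Fix real numbers $t_1,\dots,t_N$ and complex $\xi_1,\dots,\xi_N$. By Lemma~\ref{ECE}, for each fixed $\mu$ the integrand $e^{(t_r+t_s)\mu}$ is the $(r,s)$ entry of a rank-one positive semidefinite matrix, so the quadratic form $\sum_{r,s} e^{(t_r+t_s)\mu}\xi_r\overline{\xi_s}=\bigl|\sum_p e^{t_p\mu}\xi_p\bigr|^2$ is nonnegative pointwise in $\mu$. Integrating a nonnegative function against the nonnegative measure $\sigma(d\mu)$ preserves nonnegativity, which gives $\sum_{r,s}f(t_r+t_s)\xi_r\overline{\xi_s}\ge 0$; this is exactly condition~1 of Definition~\ref{decf}. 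Continuity (condition~2) follows from the finiteness hypothesis together with dominated convergence: near any fixed $t_0$ one dominates $e^{t\mu}$ by $e^{(t_0-\delta)\mu}+e^{(t_0+\delta)\mu}$, which is $\sigma$-integrable by assumption, so $f$ is continuous. I expect this direction to be routine; the only care needed is the domination step to justify continuity.

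Direction~2 is the genuine content and the main obstacle. The natural approach is a Bochner-type argument: given the positive-definiteness inequality~\eqref{pqf}, one wants to produce a representing measure. The cleanest route is to pass to the analytic extension. Because $f$ satisfies~\eqref{pqf}, the kernel $(t,s)\mapsto f(t+s)$ is positive definite, and by an inequality derived in the excerpt (the bound $f(t_1+t_2)\le\sqrt{f(2t_1)f(2t_2)}$ and its iterates) one controls the growth of $f$ along the real axis. I would show that on each vertical strip $f$ extends holomorphically, or alternatively invoke the classical theorem of Bernstein--Widder characterizing two-sided Laplace transforms: a function is a bilateral Laplace transform of a nonnegative measure precisely when it is continuous and the associated Hankel-type kernels are positive. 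Concretely, I would reduce to the one-sided case by a change of variables or by studying $f$ on $(0,\infty)$ and $(-\infty,0)$ separately, applying Bernstein's theorem on completely monotonic functions after the substitution that turns the Laplace variable into the argument.

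The hard part will be constructing the measure and proving its uniqueness rather than merely verifying positivity. For existence, I would use a weak-* compactness / Helly selection argument: discretize the positive-definite form to obtain, for each $N$, a finitely supported nonnegative measure $\sigma_N$ matching $f$ at the sample points, then extract a weakly convergent subnet whose limit $\sigma$ reproduces $f$ via~\eqref{rep}; the finiteness of $\int e^{t\mu}\sigma(d\mu)$ for all real $t$ must be recovered a~posteriori from the growth bounds on $f$. For uniqueness, the key observation is that two representing measures $\sigma_1,\sigma_2$ would give $\int e^{t\mu}(\sigma_1-\sigma_2)(d\mu)=0$ for all real $t$; since the bilateral Laplace transform is injective on measures for which it converges on an open $t$-interval (by analytic continuation of the transform into a complex strip and the identity theorem), this forces $\sigma_1=\sigma_2$. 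Because the statement is flagged as ``well known,'' I anticipate the author will simply cite Bernstein--Widder rather than reprove it; my plan accordingly is to indicate how~\eqref{pqf} feeds into that classical machinery and to emphasize that the growth control extracted from~\eqref{pqf} is what guarantees finiteness of the transform on all of $\mathbb{R}$.
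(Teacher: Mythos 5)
Your proposal is correct, and on the substantive half it coincides with the paper: assertion 2 is not reproved but delegated to the classical theory, which is exactly what the paper does (it cites Akhiezer, Theorem 5.5.4, and Widder, Theorem 21), so anticipating a citation rather than a self-contained proof was the right call. For assertion 1 you take a genuinely different, and in fact more explicit, route than the paper. The paper derives it from Lemma \ref{ECE} together with the closure properties P\,1, P\,2, P\,5 ``and the definition of the integration operation'': the integral \eqref{rep} is approximated by finite nonnegative linear combinations of the exponentials \(e^{t\mu}\), each exponentially convex, and P\,5 passes exponential convexity to the limit. You instead integrate the pointwise identity \(\sum_{r,s}e^{(t_r+t_s)\mu}\xi_r\overline{\xi_s}=\bigl|\sum_{p}e^{t_p\mu}\xi_p\bigr|^{2}\geq 0\) against the nonnegative measure \(\sigma(d\mu)\), which yields \eqref{pqf} in one line, and you verify condition 2 of Definition \ref{decf} by dominated convergence with the correct dominating function \(e^{(t_0-\delta)\mu}+e^{(t_0+\delta)\mu}\) (valid since \(e^{t\mu}\leq e^{(t_0+\delta)\mu}\) for \(\mu\geq0\) and \(e^{t\mu}\leq e^{(t_0-\delta)\mu}\) for \(\mu<0\) whenever \(|t-t_0|\leq\delta\)). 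Your version buys an explicit treatment of continuity, which the paper's one-line remark leaves implicit inside P\,5, at the cost of a measure-theoretic interchange that the paper's approximation argument does not need to state. Two caveats on your sketch of assertion 2, neither of which is a gap relative to the paper since the paper proves nothing there either: the step ``discretize and match \(f\) at sample points by a finitely supported nonnegative measure'' is itself a nontrivial moment-problem assertion and would require the very classical machinery you then invoke, so as written it is a plan rather than a proof; and the proposed reduction to completely monotonic functions on \((0,\infty)\) and \((-\infty,0)\) separately is not how the classical argument runs (Akhiezer solves a Hamburger-type moment problem built from the positive definite kernel \(f(t+s)\)). Your uniqueness argument is sound: convergence of \eqref{rep} for all real \(t\) makes the transform entire in \(t\), and injectivity follows by Fourier inversion along vertical lines applied to the finite measures \(e^{t\mu}\,\sigma_{1}(d\mu)\) and \(e^{t\mu}\,\sigma_{2}(d\mu)\).
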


The assertion 1 of the representation theorem is an evident consequence of Lemma~\,\ref{ECE}, of the properties P\,1, P\,2, P\,5, and of the definition of the integration operation.

 The proof of the assertion 2 can be found in \cite{A},\,Theorem 5.5.4, and in \cite{Wid},\,Theorem 21.

Of course, Lemma \ref{echc} is a special case of the representation theorem which corresponds to the representing measure
\[\sigma(d\nu)=1/2\big(\delta(\nu-\mu)+\delta(\nu+\mu)\big)\,d\nu,\] where \(\delta(\nu\mp\mu)\) are Dirak's \(\delta\)-functions supported at the points \(\pm\mu\).\\

Thus the Herbert Stahl theorem can be reformulated as follows:\\
\textit{Let \(A\) and \(B\) be Hermitian \(n\times{}n\) matrices.
Let the function \(f_{A,B}(t)\) is defined by \eqref{TrF}
for \(t\in(-\infty,\infty)\). Then the function \(f_{A,B}(t)\), considered as a function of the variable \(t\), is exponentially convex.}

\section{Reduction the BMV conjecture for general \(\boldsymbol{2\pmb{\times}2}\)
Hermitian matrices \(\boldsymbol{A}\) and \(\boldsymbol{B}\) to the case of special \(\boldsymbol{A}\) and \(\boldsymbol{B}\).}

\begin{lem}
\label{rt}
Let \(A\) and \(B\) be an arbitrary pair of \(2\times2\) Hermitian matrices.
Then there exists a pair \(A_0\), \(B_0\) of Hermitian \(2\times2\) matrices possessing the properties:
\begin{enumerate}
\item
The conditions
\begin{equation}
\label{ec} a).\tr A_0=0,\ \ b). \tr B_0=0,\ \ c). \tr A_0B_0=0.
\end{equation}
are satisfied.
\item
The trace-exponential functions \(f_{A,B}\) and \(f_{A_0,B_0}\) generated by these pairs
are related by the equality
\begin{equation}
\label{DuR}
f_{A,B}(t)=ce^{t\lambda}f_{A_0,B_0}(t+t_0),
\end{equation}
where
\(\lambda\) and \(t_0\) are some real numbers, \(c\) is a positive number.
\end{enumerate}
\end{lem}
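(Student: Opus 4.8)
The plan is to normalize the pair $A,B$ by peeling off their scalar (trace) parts and then performing a shift of the argument $t$. The only structural fact needed is that a scalar matrix $\gamma I$ commutes with every matrix, so it factors out of a matrix exponential as the scalar $e^{\gamma}$; combined with the cyclicity of the trace, this is the whole of the algebra involved.

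First I would split each matrix into its scalar part and its traceless part. Writing $\lambda=\frac{1}{2}\tr A$ and $\beta=\frac{1}{2}\tr B$, set $A_0=A-\lambda I$ and $\widetilde B=B-\beta I$, so that $\tr A_0=0$ and $\tr\widetilde B=0$. Since $tA+B=(t\lambda+\beta)I+(tA_0+\widetilde B)$ and the scalar part commutes with the remainder, $\exp[tA+B]=e^{t\lambda+\beta}\exp[tA_0+\widetilde B]$, whence
\[
f_{A,B}(t)=e^{\beta}\,e^{t\lambda}\,f_{A_0,\widetilde B}(t).
\]
This already produces the prefactor $c\,e^{t\lambda}$ with $c=e^{\beta}>0$ and secures conditions a) and b) of \eqref{ec}; the matrix $A_0$ is the one we shall keep, and only the orthogonality condition c), $\tr A_0B_0=0$, remains to be arranged.

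The remaining freedom is the shift $t\mapsto t+t_0$. I would observe that $f_{A_0,\widetilde B}(t)=\tr\exp[tA_0+\widetilde B]$ and $f_{A_0,B_0}(t+t_0)=\tr\exp[tA_0+(t_0A_0+B_0)]$ coincide exactly when $B_0=\widetilde B-t_0A_0$. With this choice $\tr B_0=0$ automatically, since both $\widetilde B$ and $A_0$ are traceless, and
\[
\tr A_0B_0=\tr(A_0\widetilde B)-t_0\,\tr(A_0^2).
\]
Taking $t_0=\tr(A_0\widetilde B)/\tr(A_0^2)$ makes the right-hand side vanish, which completes the construction and yields \eqref{DuR} with $c=e^{\beta}$, $\lambda=\frac{1}{2}\tr A$, and the real shift $t_0$.

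The single point that is not purely formal is the division by $\tr(A_0^2)$, so that is where I expect any (mild) obstacle to lie. Because $A_0$ is Hermitian, $A_0^2$ is positive semidefinite, and $\tr(A_0^2)=0$ forces $A_0=0$, i.e. $A=\lambda I$ is scalar. In that degenerate case $A$ and $B$ commute, and one simply takes $t_0=0$, $A_0=0$, $B_0=\widetilde B$: the three conditions in \eqref{ec} hold trivially and \eqref{DuR} reduces to $f_{A,B}(t)=e^{\beta}e^{t\lambda}f_{0,\widetilde B}(t)$. Hence there is no genuine difficulty; the lemma is a normalization exploiting only the commutativity of scalar matrices and the shift invariance of the exponent, and the sole thing to verify is that the orthogonalizing shift $t_0$ is well defined, which it is apart from the trivial scalar case.
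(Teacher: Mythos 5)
Your proof is correct and takes essentially the same route as the paper: the identical decomposition $A_0=A-\tfrac{1}{2}(\tr A)I$, the same shift $t_0=\tr(A_0B)/\tr(A_0^2)$ (your $\tr(A_0\widetilde B)$ equals $\tr(A_0B)$ because $\tr A_0=0$), the same $B_0=B-\tfrac{1}{2}(\tr B)I-t_0A_0$, and the same separate treatment of the degenerate scalar case $A_0=0$, which the paper likewise resolves trivially (with $B_0=0$ rather than your equally valid $B_0=\widetilde B$).
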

\begin{rem}
From Lemma \ref{rt} it follows that in order to prove the BMV conjecture for arbitrary pair
\(A,B\) of Hermitian \(2\times2\) matrices, it is sufficient to prove this conjecture only for
pairs \(A_0,B_0\) satisfying the conditions \eqref{ec}.
\end{rem}
\begin{proof}[Proof of Lemma \ref{rt}] Let \(A\) and \(B\) be Hermitian matrices of size  \(2\times2\)
and \(I\) be the identity matrix of size \(2\times2\). Let us define
\begin{equation}
\label{A0}
A_0=A-\frac{\tr A}{2}I.
\end{equation}
Without loss of generality we can assume that
\begin{equation}
\label{A00}
A_0\not=0.
\end{equation}
Otherwise
\begin{equation*}
f_{A,B}(t)=ce^{t\lambda}, \ \ \textup{where} \ \ \lambda=\frac{\tr A}{2},\
c=\tr e^B>0,
\end{equation*}
and \eqref{DuR} holds with \(A_0=0,B_0=0\).  Since the matrix \(A_0\) is Hermitian, from \eqref{A00}
it follows that \(A_0^2\geq 0\), \(A_0^2\not=0\). Thus
\begin{equation}
\label{NZT}
\tr A_0^2>0.
\end{equation}
Let us define
\begin{equation}
\label{Dt0}
t_0=\frac{\tr A_0B}{\tr A_0^2}\ccomma
\end{equation}
\begin{equation}
B_0=B-\frac{\tr B}{2}I-t_0A_0.
\end{equation}
Since \(\tr I=2\) and \(\tr X\) depends on \(2\times2\) matrix \(X\) linearly, the conditions \(\tr A_0=0,\,\tr B_0=0\) are fulfilled. According to \eqref{Dt0},
the condition \(\tr A_0B_0=0\) is fulfilled as well.
Since
\begin{equation*}
A=A_0+\lambda I, \ B=B_0+\mu I +t_0A_0, \ \ \textup{where} \ \
 \lambda=\frac{\tr A}{2},\   \mu=\frac{\tr B}{2},
\end{equation*}
the linear matrix pencils \(tAt\) and \(tA_0+B_0\) are related by the equality
\begin{equation*}
tAt+B=(t\lambda+\mu)I+\big((t+t_0)A_0+B_0\big).
\end{equation*}
Therefore
\begin{math}
e^{tA+B}=e^{t\lambda+\mu}e^{(t+t_0)A_0+B_0}
\end{math},
that is the equality \eqref{DuR} holds with \(c=e^\mu\).
\end{proof}
\begin{lem}
\label{RCF}
Let \(A_0,\,B_0\) be Hermitian matrices of size \(2\times2\) satisfying the
condition \eqref{ec}, \(A_0\not=0\).
 Then there exists an unitary matrix \(U\) which reduces the matrices \(A_0,\,B_0\)
 to the form
 \begin{equation}
 \label{ReCF}
 UA_0U^{\ast}=\alpha\sigmal,\ \  UB_0U^{\ast}=\beta\taul,
 \end{equation}
 where  \(\alpha>0,\,\beta\geq0\) are numbers and
 \(\sigmal\), \(\taul\) are the Pauli matrices:
 \begin{equation}
 \label{CF}
\sigmal=
 \begin{bmatrix}
 1&\,\,\,0\\ 0&-1
 \end{bmatrix}\ccomma \  \qquad
 \taul=
 \begin{bmatrix}
0& 1\\ 1&0
 \end{bmatrix}\cdot
 \end{equation}
\end{lem}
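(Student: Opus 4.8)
The plan is to build the reducing unitary as a product \(U=U_2U_1\), first diagonalizing \(A_0\) and then normalizing the phase of \(B_0\). Since \(A_0\) is Hermitian, traceless, and nonzero, its two eigenvalues are real, sum to zero, and are not both zero; hence they equal \(\alpha\) and \(-\alpha\) with \(\alpha>0\), consistently with \(\tr A_0^2=2\alpha^2>0\) from \eqref{NZT}. By the spectral theorem there is a unitary \(U_1\) diagonalizing \(A_0\), and by ordering the orthonormal eigenvectors so that the one belonging to \(+\alpha\) comes first, one gets exactly
\[
U_1A_0U_1^{\ast}=\alpha\sigmal .
\]

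Next I would transport the orthogonality condition into the diagonalizing frame. Put \(\widetilde B_0=U_1B_0U_1^{\ast}\), which is again Hermitian and traceless, so \(\widetilde B_0=\bigl[\begin{smallmatrix}p&q\\ \bar q&-p\end{smallmatrix}\bigr]\) with \(p\in\mathbb{R}\) and \(q\in\mathbb{C}\). Because the trace is invariant under unitary conjugation, \(\tr(\sigmal\,\widetilde B_0)=\alpha^{-1}\tr(A_0B_0)\), and this vanishes by the condition \(\tr A_0B_0=0\) in \eqref{ec}. A direct computation gives \(\tr(\sigmal\,\widetilde B_0)=2p\), whence \(p=0\); thus \(\widetilde B_0=\bigl[\begin{smallmatrix}0&q\\ \bar q&0\end{smallmatrix}\bigr]\) is antidiagonal.

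Finally I would strip the phase of \(q\). Writing \(q=\beta e^{i\theta}\) with \(\beta=|q|\ge 0\), set the diagonal unitary \(U_2=\mathrm{diag}\bigl(e^{-i\theta/2},\,e^{i\theta/2}\bigr)\). Being diagonal, \(U_2\) commutes with \(\sigmal\), so the already-achieved form is preserved, \(U_2(\alpha\sigmal)U_2^{\ast}=\alpha\sigmal\), while a one-line calculation gives \(U_2\widetilde B_0U_2^{\ast}=\beta\taul\). Hence \(U=U_2U_1\) produces \eqref{ReCF} with \(\alpha>0\) and \(\beta\ge 0\).

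There is essentially no serious obstacle in this argument; the only point that must be checked with care is that the second conjugation does not disturb the diagonal form already obtained for \(A_0\). This is guaranteed precisely because diagonal matrices commute with the diagonal Pauli matrix \(\sigmal\), which is what makes the two-stage reduction consistent.
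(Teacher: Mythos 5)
Your proof is correct and follows essentially the same route as the paper: diagonalize \(A_0\) first (tracelessness forcing eigenvalues \(\pm\alpha\)), use the condition \(\tr A_0B_0=0\) to show the transformed \(B_0\) is antidiagonal, and then remove the phase of the off-diagonal entry by a diagonal unitary that commutes with \(\sigmal\). The only cosmetic difference is your choice \(U_2=\mathrm{diag}(e^{-i\theta/2},e^{i\theta/2})\) where the paper uses \(\mathrm{diag}(e^{i\vartheta},1)\); these act identically by conjugation.
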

\begin{proof}
Let \(U\) be an unitary matrix which reduces the Hermitian matrix \(A_0\) to
the diagonal form:
\begin{math}
UA_0U^{\ast}=
\bigl[\begin{smallmatrix}\lambda_1&0\\0&\lambda_2\end{smallmatrix}\bigr].
\end{math}
Since \(\tr A_0=0\), the equality \mbox{\(\lambda_1=-\lambda_2\)} holds. Since \(A_0\not=0\), also \(\lambda_1,\lambda_2\not=0\). Thus for some unitary matrix \(U\),
the first of the equalities \eqref{ReCF} holds with some number \(\alpha>0\). We fix this matrix \(U\) and \emph{define} the matrix
\begin{math}
\bigl[\begin{smallmatrix}b_{11}&b_{12}\\b_{21}&b_{22}\end{smallmatrix}\bigr]
=UB_0U^{\ast}.
\end{math}
Since \(\tr B_0=0\) and the matrix trace of is an unitary invariant, the equality
\(b_{11}+b_{22}=0\) holds. Since
\begin{math}
UA_0B_0U^{\ast}=UA_0U^{\ast}\cdot\, UB_0U^{\ast}=\bigl[\begin{smallmatrix}\alpha&0\\0&-\alpha\end{smallmatrix}\bigr]
\cdot\bigl[\begin{smallmatrix}b_{11}&b_{12}\\b_{21}&b_{22}\end{smallmatrix}\bigr]=
\bigl[\begin{smallmatrix}\alpha b_{11}&\alpha b_{12}\\-\alpha b_{21}&-\alpha b_{22}\end{smallmatrix}\bigr]
\end{math}
and \(\tr A_0B_0=0\), also
\begin{math}
\tr \bigl[\begin{smallmatrix}\alpha b_{11}&\alpha b_{12}\\-\alpha b_{21}&-\alpha b_{22}\end{smallmatrix}\bigr]=0
\end{math},
that is \(\alpha(b_{11}-b_{22})=0\). Since \(\alpha\not=0\), \(b_{11}-b_{22}=0\).          Finally, \(b_{11}=b_{22}=0\). Since the matrix
\begin{math}
\bigl[\begin{smallmatrix}b_{11}&b_{12}\\b_{21}&b_{22}\end{smallmatrix}\bigr]
\end{math}
is Hermitian, its entries \(b_{12}\) and \(b_{21}\) are conjugate complex numbers:
\(b_{12}=\overline{b_{21}}\).
The additional unitary equivalency transformation
\begin{math}
X\to
\Bigl[\begin{smallmatrix}
e^{i\vartheta}&0\\
0&1
\end{smallmatrix}\Bigr]
X
\Bigl[\begin{smallmatrix}
e^{-i\vartheta}&0\\
0&1
\end{smallmatrix}\Bigr]
\end{math}
does not change the matrix \(\sigmal\), but allows to reduce the matrix
\begin{math}
\bigl[\begin{smallmatrix}0&b_{12}\\\overline{b_{12}}&0\end{smallmatrix}\bigr]
\end{math}
 to the form \(\beta\taul\).
\end{proof}

\begin{lem}
 \label{PCF}
Let \(A_0\) and \(B_0\) be \(2\times2\) Hermitian matrices satisfying the
conditions \eqref{ec} \eqref{A00}, and \(U\) be the unitary matrix which reduces the pair \(A_0,\,B_0\) to the pair \(\alpha\sigmal,\,\beta\taul\)  according to \eqref{ReCF},\,\eqref{CF}. Then the trace-exponential functions generated by
the pairs \(A_0,\,B_0\) and \(\alpha\sigmal,\,\beta\taul\) coincide:
\begin{equation}
\label{ReEq}
 f_{A_0,B_0}(t)=f_{\alpha\sigmas,\,\beta\taus}(t).
\end{equation}
\end{lem}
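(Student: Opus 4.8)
The plan is to reduce everything to a single observation: the matrix exponential intertwines unitary conjugation, and the trace is a unitary invariant. By hypothesis \(U\) is unitary and satisfies \(UA_0U^{\ast}=\alpha\sigmal\), \(UB_0U^{\ast}=\beta\taul\). Since the map \(X\mapsto UXU^{\ast}\) is linear, the linear pencil transforms as
\[
U(tA_0+B_0)U^{\ast}=t\,UA_0U^{\ast}+UB_0U^{\ast}=t\alpha\sigmal+\beta\taul,
\]
for every \(t\in\mathbb{R}\) (indeed for every \(t\in\mathbb{C}\)).

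Next I would use that conjugation by a fixed unitary matrix commutes with the exponential: for any square matrix \(X\) one has \(\exp(UXU^{\ast})=U(\exp X)U^{\ast}\). This is immediate from the power series \(\exp Y=\sum_{k\geq0}Y^{k}/k!\) together with the identity \((UXU^{\ast})^{k}=UX^{k}U^{\ast}\), which follows by telescoping from \(U^{\ast}U=I\). Applying this with \(X=tA_0+B_0\) gives
\[
\exp[t\alpha\sigmal+\beta\taul]=\exp\bigl[U(tA_0+B_0)U^{\ast}\bigr]=U\,\exp[tA_0+B_0]\,U^{\ast}.
\]

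Finally I would take the trace of both sides and invoke its cyclic (hence unitary) invariance, \(\tr(UYU^{\ast})=\tr(U^{\ast}UY)=\tr Y\), to obtain
\[
f_{\alpha\sigmas,\,\beta\taus}(t)=\tr\exp[t\alpha\sigmal+\beta\taul]=\tr\exp[tA_0+B_0]=f_{A_0,B_0}(t),
\]
which is precisely \eqref{ReEq}. There is essentially no obstacle here: the only point deserving a word of justification is that exponentiation commutes with unitary conjugation, and this is a routine consequence of the series definition. The lemma is thus a bookkeeping step recording that passing to the unitarily equivalent pair \(\alpha\sigmal,\,\beta\taul\) leaves the trace-exponential function unchanged, so that the BMV conjecture for \(A_0,B_0\) is equivalent to the exponential convexity of \(f_{\alpha\sigmas,\,\beta\taus}\).
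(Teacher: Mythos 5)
Your proof is correct and is essentially the paper's own argument: conjugate the pencil by \(U\), use that \(\exp(UXU^{\ast})=U(\exp X)U^{\ast}\), and invoke the unitary invariance of the trace. You merely spell out the power-series justification that the paper leaves implicit (and, incidentally, your version fixes a small typo in the paper's displayed chain, where a \(\tr\) is missing).
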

\begin{proof}
\begin{multline*}
f_{A_0,B_0}(t)=\tr e^{tA_0+B_0}=\tr Ue^{tA_0+B_0}U^{\ast}=\\
=e^{U(tA_0+B_0)U^{\ast}}=e^{t\alpha\sigmas+\beta\taus}=
f_{\alpha\sigmas,\beta\taus}(t).
\end{multline*}
\end{proof}
\begin{rem}
\label{red}
From Lemmas \ref{rt}, \ref{RCF} and \ref{PCF} it follows that in order to prove the BMV conjecture for arbitrary pair \(A,B\)
of Hermitian \(2\times2\) matrices, it is enough to prove this conjecture for any pair of the form \(A=\alpha\sigmal,\,B=\beta\taul\) with
\(\alpha>0,\beta\geq0\).
\end{rem}

\section{The formulation of the main Theorem}
\begin{thm}[\textsf{The main theorem}]
\label{MaTe}
Let \(\alpha,\beta\) be arbitrary non-negative numbers and
\(\sigmal,\taul\) be the Pauli matrices
defined by \eqref{CF}.

Then the trace-exponential function \(f_{\alpha\sigmas,\beta\taus}(t)\)
generated by the pair of matrices \(\alpha\sigmal,\beta\taul\)
is exponentially convex.
\end{thm}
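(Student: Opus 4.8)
The plan is to exhibit $f_{\alpha\sigmas,\beta\taus}(t)=\tr e^{t\alpha\sigmal+\beta\taul}$ as a pointwise limit of functions that are manifestly exponentially convex, and then to invoke property~P\,5. The approximating functions come from the Lie product formula
\[
e^{t\alpha\sigmal+\beta\taul}=\lim_{n\to\infty}\Big(e^{\frac{t\alpha}{n}\sigmal}\,e^{\frac{\beta}{n}\taul}\Big)^{n},
\]
which, together with the continuity of the trace, gives $f_{\alpha\sigmas,\beta\taus}(t)=\lim_{n\to\infty}f_n(t)$, where $f_n(t)=\tr\big[(X_nY_n)^n\big]$ with $X_n=e^{(t\alpha/n)\sigmal}$ and $Y_n=e^{(\beta/n)\taul}$. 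Since the limit equals $2\cosh\sqrt{\alpha^2t^2+\beta^2}$, which is finite for every $t$, property~P\,5 will transfer exponential convexity from the functions $f_n$ to $f_{\alpha\sigmas,\beta\taus}$. Thus the whole task reduces to showing that each $f_n$ is exponentially convex.

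Next I would compute the two factors explicitly, using $\sigmal^{2}=\taul^{2}=I$. This yields the diagonal matrix $X_n=\mathrm{diag}\big(e^{t\alpha/n},e^{-t\alpha/n}\big)$ and $Y_n=\cosh(\beta/n)\,I+\sinh(\beta/n)\,\taul$, whose four entries are $\cosh(\beta/n)$ and $\sinh(\beta/n)$. The decisive point is their sign: because $\beta\ge 0$, all entries of $Y_n$ are nonnegative. Writing $u=e^{t\alpha/n}$, the product then has the shape
\[
X_nY_n=\begin{bmatrix} u\cosh(\beta/n) & u\sinh(\beta/n)\\ u^{-1}\sinh(\beta/n) & u^{-1}\cosh(\beta/n)\end{bmatrix},
\]
so that every entry is a nonnegative constant times $u^{+1}$ or $u^{-1}$.

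The key step is then purely combinatorial. Expanding
\[
\tr\big[(X_nY_n)^n\big]=\!\!\sum_{i_1,\dots,i_n\in\{1,2\}}\!\!(X_nY_n)_{i_1i_2}(X_nY_n)_{i_2i_3}\cdots(X_nY_n)_{i_ni_1},
\]
each summand is a product of $n$ entries of $X_nY_n$, hence a nonnegative constant times a power $u^{m}$ with $m\in\{-n,-n+2,\dots,n\}$ (the exponent $m$ counting visits to the first vertex minus visits to the second). Collecting the terms according to the exponent $m$ gives
\[
f_n(t)=\sum_{m}C^{(n)}_m\,e^{(\alpha m/n)\,t},\qquad C^{(n)}_m\ge 0,
\]
a finite nonnegative combination of the exponentials $e^{(\alpha m/n)t}$. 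By Lemma~\ref{ECE} each such exponential is exponentially convex, and by properties~P\,1 and~P\,2 so is $f_n$. Property~P\,5 applied to $f_{\alpha\sigmas,\beta\taus}=\lim_n f_n$ then finishes the argument.

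The main obstacle I anticipate is isolating and justifying the nonnegativity of the coefficients $C^{(n)}_m$; this is exactly where the hypothesis $\beta\ge 0$ is used (through $\sinh(\beta/n)\ge 0$) and where the structure of the Pauli matrices enters. Everything else—the Lie product formula, the continuity of the trace, and the stability of exponential convexity under nonnegative linear combinations and pointwise limits—is standard and already available from Lemma~\ref{ECE} and the listed properties~P\,1--P\,5.
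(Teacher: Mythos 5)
Your proof is correct, but it reaches positivity by a mechanism genuinely different from the paper's. The paper does not work with \(e^{t\alpha\sigmas+\beta\taus}\) alone: it first symmetrizes, setting \(\mathscr{E}(t;\alpha,\beta)=\frac12\bigl[e^{t\alpha\sigmas+\beta\taus}+e^{t\alpha\sigmas-\beta\taus}\bigr]\) (legitimate because the trace depends only on \(\beta^2\)), applies a modified Lie formula \(e^{X+Y}=\lim_{N\to\infty}\bigl(e^{X/N}(I+Y/N)\bigr)^N\), expands into \(2^N\) words in the two letters \(e^{t\alpha\sigmas/N}\) and \(\pm\beta\taul/N\), kills the words of odd degree in \(\beta\) by the \(\pm\beta\) cancellation, and collapses the even-degree words using the commutation relation \(\taul e^{\lambda\sigmas}\taul=e^{-\lambda\sigmas}\); this produces a matrix-level representation \(\mathscr{E}_N(t;\alpha,\beta)=\int_{[-1,1]} e^{t\alpha\mu\sigmas}\,\rho_N(d\mu)\) with a nonnegative measure \(\rho_N\), whence exponential convexity of each diagonal entry. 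You instead keep the standard Lie formula with the genuine exponential factor \(e^{(\beta/n)\taus}=\cosh(\beta/n)I+\sinh(\beta/n)\taul\), whose \emph{entrywise} nonnegativity (this is exactly where \(\beta\ge0\) enters for you) makes every term of the closed-walk expansion of \(\tr\bigl[(X_nY_n)^n\bigr]\) a nonnegative, \(t\)-independent constant (a product of \(\cosh(\beta/n)\)'s and \(\sinh(\beta/n)\)'s) times \(e^{(\alpha m/n)t}\) — no symmetrization, no cancellation, and no use of \(\taul\sigmal\taul=-\sigmal\) beyond the identity \(\taul^2=I\) hidden in the formula for \(e^{s\taus}\). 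Your combinatorics is sound: each step of the walk contributes \(u^{+1}\) or \(u^{-1}\) according to its departure row, so the exponent is the signed visit count and ranges over \(\{-n,-n+2,\dots,n\}\), and Lemma~\ref{ECE}, P\,1, P\,2 and P\,5 (with the finite pointwise limit \(2\cosh\sqrt{\alpha^2t^2+\beta^2}\)) close the argument as you say. What your route buys is brevity and directness at the trace level; what the paper's route buys is the matrix-valued integral representation with the uniform mass bound \(\int\rho_N(d\mu)\le e^{\beta}\), which the paper exploits afterwards (Theorem~\ref{TIR}, Remark~\ref{MFE}) to pass to a limiting measure and identify it explicitly through the Bessel density — information your trace-level expansion does not by itself provide, although the \((1,1)\) and \((2,2)\) entries of your \((X_nY_n)^n\), expanded the same way over walks from a vertex to itself, would recover the diagonal representation if you needed it.
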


The trace-exponential function \(f_{\alpha\sigmas,\beta\taus}(t)\) can be easily
found explicitly:
\begin{equation}
\label{ETEF}
f_{\alpha\sigma,\beta\tau}(t)=2\cosh\sqrt{\alpha^2t^2+\beta^2}\,,
\end{equation}
where \(\cosh \zeta\) is the hyperbolic cosine function. However the exponential convexity of the function
\(\cosh\sqrt{\alpha^2t^2+\beta^2}\) is not evident.

There are different ways to prove the exponential convexity of the function \(f_{\alpha\sigma,\beta\tau}(t)\).
One can forget the "matrix" origin of the function  \(f_{\alpha\sigmas,\beta\taus}(t)\) and work with its
analytic expression \(\cosh\sqrt{\alpha^2t^2+\beta^2}\) only. The function \(\cosh\sqrt{\alpha^2t^2+\beta^2}\)
can be presented as a bilateral Laplace transform of some measure. The density of this measure can be expressed
in terms of the modified Bessel function \(I_1\). From this expression it is evident that the representing measure
is non-negative. However the calculation of the representing measure is not so transparent.

In the present paper we give a purely "matrix" proof of the BMV conjecture for
\(2\times2\) matrices. This proof is based on the Lie product formula
for the exponential of the sum of two matrices.
The proof also uses the commutation relations for the Pauli matrices and does not
use anything else.

\section{The proof of Theorem \(\boldsymbol{\ref{MaTe}}\).}\label{FiPr}

Since the trace-exponential function \(f_{\alpha\sigmas,\beta\taus}(t)\)
depends only on \(\beta^2\),
the equality
\begin{equation*}
f_{\alpha\sigmas,\beta\taus}(t)=f_{\alpha\sigmas,-\beta\taus}(t)
\end{equation*}
holds for any numbers \(\alpha,\beta\).
Therefore,
\begin{equation}
\label{AuMa}
f_{\alpha\sigmas,\beta\taus}(t)=\\
\tr\mathscr{E}(t;\alpha,\beta),
\end{equation}
where \(\mathscr{E}(t;\alpha,\beta)\) is the \(2\times2\) matrix-function:
\begin{equation}
\label{plmi}
\mathscr{E}(t;\alpha,\beta)=\frac{1}{2}\bigl[e^{t\alpha\sigmas+\beta\taus}
+e^{t\alpha\sigmas-\beta\taus}\bigr].
\end{equation}

 \begin{lem}[\textsf{A version of the Lie product formula}]
 \label{LPF}
 Let \(X\) and \(Y\) be square matrices of the same size, say \(n\times n\). Then
 \begin{equation}
 \label{lpf}
 e^{X+Y}=\lim_{N\to\infty}\Bigl(e^{\frac{X}{N}}\bigl(I+\tfrac{Y}{N}\bigr)\Bigr)^N.
 \end{equation}
 \end{lem}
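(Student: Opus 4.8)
The plan is to compare the product $\bigl(e^{X/N}(I+\tfrac{Y}{N})\bigr)^N$ with the exact $N$-th power $\bigl(e^{(X+Y)/N}\bigr)^N=e^{X+Y}$, exploiting the fact that $I+\tfrac{Y}{N}$ is precisely the first-order truncation of the power series for $e^{Y/N}$. Since this truncation differs from $e^{Y/N}$ only by terms of order $N^{-2}$, each single factor $e^{X/N}(I+\tfrac{Y}{N})$ will agree with $e^{(X+Y)/N}$ up to an error of order $N^{-2}$; a telescoping estimate then shows that the error accumulated over the $N$ factors is of order $N^{-1}$ and disappears in the limit. This is the standard Lie--Trotter argument, adapted so that the genuine exponential $e^{Y/N}$ is replaced by its linear part.

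Concretely, I would write $A_N=e^{X/N}(I+\tfrac{Y}{N})$ and $C_N=e^{(X+Y)/N}$, so that $C_N^N=e^{X+Y}$ for every $N$. Using the power series for the matrix exponential one obtains $e^{X/N}=I+\tfrac{X}{N}+R_N$ with $\|R_N\|\le\tfrac{\|X\|^2}{2N^2}e^{\|X\|}$, and multiplying out gives $A_N=I+\tfrac{X+Y}{N}+S_N$ with $\|S_N\|=O(N^{-2})$. Likewise $C_N=I+\tfrac{X+Y}{N}+T_N$ with $\|T_N\|=O(N^{-2})$. Subtracting, the per-step discrepancy satisfies $\|A_N-C_N\|=O(N^{-2})$. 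I would also record the uniform bounds $\|A_N\|\le e^{(\|X\|+\|Y\|)/N}$ (from $\|e^{X/N}\|\le e^{\|X\|/N}$ and $\|I+\tfrac{Y}{N}\|\le e^{\|Y\|/N}$) together with $\|C_N\|\le e^{\|X+Y\|/N}$, so that both $\|A_N\|^{N-1}$ and $\|C_N\|^{N-1}$ stay below the $N$-independent constant $M=e^{\|X\|+\|Y\|}$.

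The final step uses the telescoping identity $A_N^N-C_N^N=\sum_{k=0}^{N-1}A_N^{\,k}(A_N-C_N)C_N^{\,N-1-k}$. Taking norms and inserting the bounds above yields $\|A_N^N-C_N^N\|\le N\,M\,\|A_N-C_N\|=N\cdot O(N^{-2})=O(N^{-1})$, which tends to $0$ as $N\to\infty$. Since $C_N^N=e^{X+Y}$ identically, this gives $A_N^N\to e^{X+Y}$, which is exactly \eqref{lpf}. The only delicate point — the part I expect to be the main obstacle — is bookkeeping the remainder terms so that the implied constants in the various $O(N^{-2})$ estimates are genuinely uniform in $N$ (depending only on $\|X\|$ and $\|Y\|$), and verifying that multiplying the uniform factor bound $M$ by the $N$ summands of the telescoping sum leaves a net factor $N^{-1}$ rather than a divergent one; once these estimates are pinned down, the convergence is immediate.
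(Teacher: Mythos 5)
Your proof is correct, and the bookkeeping you flag as the delicate point does close: with $A_N=e^{X/N}\bigl(I+\tfrac{Y}{N}\bigr)$ one gets $A_N=I+\tfrac{X+Y}{N}+S_N$ with the explicit $N$-independent constant $\|S_N\|\le\bigl(\|X\|\,\|Y\|+\tfrac{1}{2}\|X\|^2e^{\|X\|}(1+\|Y\|)\bigr)N^{-2}$, and since $\|A_N\|^k\|C_N\|^{N-1-k}\le e^{\|X\|+\|Y\|}$ for all $0\le k\le N-1$, the telescoping sum contributes only a factor $N$ against the $O(N^{-2})$ per-step discrepancy, exactly as you claim. Your route does, however, differ from the paper's: the paper gives no self-contained argument at all, remarking only that the proof of \cite[Theorem 2.10]{Ha} can be modified, and Hall's proof of the Lie product formula runs through the matrix logarithm --- one checks $e^{X/N}\bigl(I+\tfrac{Y}{N}\bigr)=I+\tfrac{X+Y}{N}+O(N^{-2})$, notes that for large $N$ this lies in the neighbourhood of $I$ where $\log$ is defined and analytic, deduces $N\log\bigl(e^{X/N}\bigl(I+\tfrac{Y}{N}\bigr)\bigr)\to X+Y$, and concludes by continuity of the exponential. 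Your telescoping identity $A_N^N-C_N^N=\sum_{k=0}^{N-1}A_N^{\,k}(A_N-C_N)C_N^{\,N-1-k}$ avoids the logarithm entirely: it needs no statement about the domain of $\log$ or the $\exp$--$\log$ identity, only submultiplicativity of the operator norm, so it is more elementary and genuinely self-contained, at the cost of being longer than the citation the paper settles for. Both arguments ultimately rest on the same observation, namely that $I+\tfrac{Y}{N}$ agrees with $e^{Y/N}$ to order $N^{-2}$, so replacing the exponential factor by its linear truncation is invisible in the limit.
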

 \begin{proof} Proof of the equality \eqref{lpf} can be modified from the
 proof which is presented in \cite[Theorem 2.10]{Ha}.
 \end{proof}
 \begin{proof}[Proof of Theorem \ref{MaTe}]
 We apply the equality \eqref{lpf} in the cases  \(X=t\alpha\sigmal\)
 and \(Y\) is one in two matrices \(Y=\beta\taul\),
 \(Y=-\beta\taul\).

The equality
\begin{equation}
\label{sq}
\taul^2=I
\end{equation}
and the commutation relation
 \begin{equation}
 \label{com}
 \taul\sigmal\taul=-\sigmal
\end{equation}
play crucial role in the proof of Theorem \ref{MaTe}.

 For every number \(\lambda\),
the matrix exponential \(e^{\lambda\sigmas}\) is
 a diagonal \(2\times 2\) matrix:
\begin{equation}
\label{DME}
e^{\lambda\sigmas}=
\begin{bmatrix}
e^{\lambda}&0\\
0&e^{-\lambda}
\end{bmatrix}.
\end{equation}
From \eqref{sq} and \eqref{com} the commutation relation
for the matrix exponentials \(e^{\lambda\sigmas}\), follows:
 \begin{gather}
  \label{ecom}
 \taul e^{\lambda\sigmas}\taul=e^{-\lambda\sigmas}, \ \ \forall\,\lambda\in\mathbb{R}.
 \end{gather}

 According to \eqref{plmi} and Lemma \ref{LPF},
 \begin{equation}
 \label{MeLR}
\mathscr{E}(t;\alpha,\beta)=\lim_{N\to\infty}\mathscr{E}_N(t;\alpha,\beta),
 \end{equation}
 where
 \begin{equation}
 \label{CiLPF}
 \mathscr{E}_N(t;\alpha,\beta)=
\frac{1}{2}\Bigl[ \mathscr{E}_N^{+}(t;\alpha,\beta)+
 \mathscr{E}_N^{-}(t;\alpha,\beta)\Bigr],
 \end{equation}
 \begin{equation}
 \label{P}
 \mathscr{E}_N^{+}(t;\alpha,\beta)=
\Bigl(e^{\frac{t\alpha\sigmas}{N}}\bigl(I+\tfrac{\beta\taup}{N}\bigr)\Bigr)^N,
\quad
 \mathscr{E}_N^{-}(t;\alpha,\beta)=
\Bigl(e^{\frac{t\alpha\sigmas}{N}}\bigl(I-\tfrac{\beta\taup}{N}\bigr)\Bigr)^N.
 \end{equation}
 From \eqref{P} it follows that

 \begin{subequations}
 \label{soae}
 \begin{gather}
 \label{soaep}
 \mathscr{E}_N^{+}(t;\alpha,\beta)=
 \sum\limits_{\varepsilon_1,\varepsilon_2\,\ldots\,\varepsilon_N}
 e^{\frac{t\alpha\sigmas}{N}}M_{\,\varepsilon_1}^{+}
 e^{\frac{t\alpha\sigmas}{N}}M_{\,\varepsilon_2}^{+}\,\ldots\,
 e^{\frac{t\alpha\sigmas}{N}}M_{\,\varepsilon_N}^{+},\\
 \label{soaem}
 \mathscr{E}_N^{-}(t;\alpha,\beta)=
 \sum\limits_{\varepsilon_1,\varepsilon_2\,\ldots\,\varepsilon_N}
 e^{\frac{t\alpha\sigmas}{N}}M_{\,\varepsilon_1}^{-}
 e^{\frac{t\alpha\sigmas}{N}}M_{\,\varepsilon_2}^{-}\,\ldots\,
 e^{\frac{t\alpha\sigmas}{N}}M_{\,\varepsilon_N}^{-},
\end{gather}
 \end{subequations}
 where each of \(\varepsilon_j,\,j=1,2,\,\ldots\,N\), takes value either \(0\),
 or \(1\),
 and the factors \(M_{\varepsilon}^{\pm}\) are:
 \begin{equation}
 \label{faM}
 M_{0}^{+}=I,\, M_{0}^{-}=I,\, M_{1}^{+}=\tfrac{\beta\taup}{N}\ccomma\,
 M_{1}^{-}=-\tfrac{\beta\taup}{N}\cdot
\end{equation}
  The sums in \eqref{soae} runs over all possible combinations
  \(\varepsilon_1,\varepsilon_2\,\ldots\,\varepsilon_N\) with either \(\varepsilon_j=0\)
  or  \(\varepsilon_j=1\). (There are \(2^N\) such combinations.)
  Grouping terms, we present the sums \eqref{soae} as iterated sums, where index summation \(m\) in the external sum runs over
  the set \(0,1,2,\,\ldots\,,N\). Each term in the internal sum is a product%
  \footnote{
  We omit the "trivial" factors \(M_{\,0}^{+}=I\),
  \(M_{\,0}^{-}=I\).
  }
   which contains \(N\) factors of the form \(e^{t\alpha\frac{1}{N}\sigmas}\) and \(m\) factors of the form \(\pm\frac{\beta\taup}{N}\). These factors in  general do not commute. So the generic term of the internal sum
  is the "word" \(W=F_1\cdot F_2\cdot\,\,\cdots\,\,\cdot F_k\cdot\,\,\cdots\,\,\cdot F_{N+m}\),
  consisting of two letters only: either \(F_k=e^{t\alpha\frac{1}{N}\sigmas}\) or \(F_k=\pm\frac{\beta\taup}{N}\). In the word \(W\), the letters
  \(F_k=\pm\frac{\beta\taup}{N}\)
   occupy \(m,\,0\leq m\leq N\), positions enumerated by \(k=p_1,k=p_2,\,\ldots\,,k=p_m\).
   Since each two neighbouring letters  of the form \(\pm\frac{\beta\taup}{N}\)
   must be separated by at least one letter of the form \(e^{t\alpha\frac{1}{N}\sigmas}\), the
    subscripts \(p_j,\,1\leq j\leq m\) enumerating positions of letters
    of the form \(\pm\frac{\taup}{N}\) must satisfy the conditions
    \begin{equation}
    \label{Adm}
    1<p_1,\,p_1+1<p_2,\,p_2+1<p_3,\,\ldots\,,p_{m-1}+1<p_{m},\,p_m\leq N+m.
    \end{equation}
    The letters
   \(F_k=e^{t\alpha\frac{1}{N}\sigmas}\) occupy the remaining \(N\)
   position.

    Thus
  \begin{subequations}
  \label{PoS}
  \begin{align}
  \label{PoSp}
  \mathscr{E}_N^{+}(t;\alpha,\beta)&=
  \sum\limits_{0\leq m\leq N}\Bigl(\frac{1}{N^m}\hspace*{-1.0ex}
  \sum\limits_{p_1,p_2,\,\ldots
  \,p_m}\hspace*{-2.0ex}W^{+}_{p_1,p_2,\,\ldots\,,p_m}\Bigr),\\
  \label{PoSm}
  \mathscr{E}_N^{-}(t;\alpha,\beta)&=
  \sum\limits_{0\leq m\leq N}\Bigl(\frac{1}{N^m}\hspace*{-1.0ex}
  \sum\limits_{p_1,p_2,\,\ldots
  \,p_m}\hspace*{-2.0ex}W^{-}_{p_1,p_2,\,\ldots\,,p_m}\Bigr),
  \end{align}
  \end{subequations}
  where
  \begin{subequations}
  \label{w}
  \begin{multline}
  \label{wp}
  W^{+}_{p_1,p_2,\,\ldots\,,p_m}=\beta^m\cdot
  e^{t\alpha\frac{p_1-1}{N}\sigmas}\cdot\taul\cdot
   e^{t\alpha\frac{p_2-p_1-1}{N}\sigmas}\cdot\taul\cdot
   e^{t\alpha\frac{p_3-p_2-1}{N}\sigmas}\cdot\taul\cdot\\
  \cdot e^{t\alpha\frac{p_4-p_3-1}{N}\sigmas} \cdot\taul\cdot\,\,\cdots\,\,\cdot
  e^{t\alpha\frac{p_m-p_{m-1}-1}{N}\sigmas}\cdot\taul\cdot
   e^{t\alpha(1-\frac{p_m-m}{N})\sigmas},
  \end{multline}
  \begin{multline}
   \label{wm}
  W^{-}_{p_1,p_2,\,\ldots\,,p_m}=(-\beta)^m\cdot
  e^{t\alpha\frac{p_1-1}{N}\sigmas}\cdot\taul\cdot
   e^{t\alpha\frac{p_2-p_1-1}{N}\sigmas}\cdot\taul\cdot
   e^{t\alpha\frac{p_3-p_2-1}{N}\sigmas}\cdot\taul\cdot\\
  \cdot e^{t\alpha\frac{p_4-p_3-1}{N}\sigmas} \cdot\taul\cdot\,\,\cdots\,\,\cdot
  e^{t\alpha\frac{p_m-p_{m-1}-1}{N}\sigmas}\cdot\taul\cdot
   e^{t\alpha(1-\frac{p_m-m}{N})\sigmas},
  \end{multline}
  \end{subequations}
  and the inner sums in  \eqref{PoS} runs over all sets of \(m\) integers
   \(p_1,p_2,\,\ldots\,p_m\) satisfying the conditions \eqref{Adm}.
    There are \(\binom{N}{m}=\frac{N!}{m!(N-m)!}\) such sets of \(m\) integers.

   By definition, the terms of the sums \eqref{PoS} corresponding to \(m=0\) are equal to \(e^{t\alpha\sigmas}\).

 In the expressions \eqref{PoS}, we should consider differently
 terms even \(m\) and with odd \(m\).

  If \(m\) is odd, then in the expressions \eqref{w}
  for the words \(W^{+}_{p_1,p_2,\,\ldots\,,p_m}\) and
  \(W^{-}_{p_1,p_2,\,\ldots\,,p_m}\),
 the factors \(\beta^m\) and \((-\beta)^m\) are of opposed signs.
 All other factors in these expressions coincide term by term. Therefore
 \begin{multline}
 \label{ann}
 W^{+}_{p_1,p_2,\,\ldots\,,p_m}+W^{-}_{p_1,p_2,\,\ldots\,,p_m}=0
 \quad \text{for each odd} \ m, \ \ \text{for each}\\
  \text{ set of subscripts  \(p_1,p_2,\,\ldots\,,p_m\) satisfying the conditions \eqref{Adm}}.
\end{multline}

 If \(m\) is even, then the factors \(\beta^m\) and \((-\beta)^m\)
 in the expressions \eqref{w}
  for the words \(W^{+}_{p_1,p_2,\,\ldots\,,p_m}\) and
  \(W^{-}_{p_1,p_2,\,\ldots\,,p_m}\)
  coincide.
  All other factors in these expressions coincide term by term as well.
  Therefore
 \begin{multline}
 \label{coi}
 W^{+}_{p_1,p_2,\,\ldots\,,p_m}=W^{-}_{p_1,p_2,\,\ldots\,,p_m}
 \quad \text{for each even} \ m, \ \ \text{for each}\\
  \text{ set of subscripts \(p_1,p_2,\,\ldots\,,p_m\) satisfying the conditions \eqref{Adm}}.
 \end{multline}
  For even \(m\), say \(m=2l\), the expression \eqref{w} for the word
  \( W^{+}_{p_1,p_2,\,\ldots\,,p_{2l}}=W^{-}_{p_1,p_2,\,\ldots\,,p_{2l}}\)
  can be simplified.  Let us choose and fix the set
  \(p_1,p_2,\,\) \(\ldots\,,p_{2l}\) of subscripts satisfying the conditions
  \eqref{Adm}.
 The factors \(\taul\)-s in the expression in \eqref{PoSp}-\eqref{PoSm} corresponding to this set of subscripts can be grouped by pairs of adjacent factors:
 \begin{multline}
 \label{AdF}
 W^{\pm}_{p_1,p_2,\,\ldots\,p_{2l}}=\beta^{2l}\cdot
 e^{t\alpha\frac{p_1-1}{N}\sigmas}\cdot(\taul
 e^{t\alpha\frac{p_2-p_1-1}{N}\sigmas}\taul)\cdot
 e^{t\alpha\frac{p_3-p_2-1}{N}\sigmas}\cdot
 \\
 \cdot\,\,\cdots\,\,\cdot
 e^{t\alpha\frac{p_{2l-1}-p_{2l-2}-1}{N}\sigmas}\cdot
 (\taul
 e^{t\alpha\frac{p_{2l}-p_{2l-1}-1}{N}\sigmas}\taul)\cdot
 e^{t\alpha(1-\frac{p_{2l}-2l}{N})\sigmas},
 \end{multline}
 Using \eqref{ecom}, we obtain that
 \begin{equation}
 \label{CoRE}
 \taul e^{t\alpha\frac{p_{2j}-p_{2j-1}-1}{N}\sigmas}\taul=
 e^{-t\alpha\frac{p_{2j}-p_{2j-1}-1}{N}\sigmas},\ \ 1\leq j\leq l.
 \end{equation}
  Hence
  \begin{equation}
  \label{expl}
   W^{+}_{p_1,p_2,\,\ldots\,,p_{2l}}=W^{-}_{p_1,p_2,\,\ldots\,,p_{2l}}=
   \beta^{2l} e^{t\alpha\mu_{p_1,\,\ldots\,p_{2l};N}\sigmas},
  \end{equation}
 where
 \begin{equation}
 \label{PoEx}
 \mu_{p_1,\,\ldots\,p_{2l};N}=\tfrac{1}{N}[2p_1-2p_2+2p_3-\,\cdots\,
 +2p_{2l-1}-2p_{2l}+N+2l].
 \end{equation}
 The numbers \(\mu_{p_1,\,\ldots\,p_{2l};N}\) satisfy the inequalities
 \begin{equation}
-(1-2/N) \leq\mu_{p_1,\,\ldots\,p_{2l};N}\leq1.
 \end{equation}
 From  \eqref{CiLPF}, \eqref{PoS}, \eqref{ann}, and \eqref{expl} it follows that
\begin{equation}
\label{FaEq}
\mathscr{E}_N(t;\alpha,\beta)=e^{t\alpha\sigmas}+
\sum\limits_{l:1\leq l\leq N/2 }\Bigl(\tfrac{\beta^{2l}}{N^{2l}}\hspace*{-2.0ex}
\sum\limits_{p_1,p_2,\,\ldots\,,p_{2l}}\hspace*{-2.0ex}
e^{t\alpha\mu_{p_1,p_2,\,\ldots\,,p_{2l};N}\sigmas}\Bigr),
\end{equation}
where \(p_1,p_2,\,\ldots\,,p_{2l}\) run over the set of integers satisfying the
conditions \eqref{Adm},
the numbers \(\mu_{p_1,\,\ldots\,p_{2l};N}\) are defined in \eqref{PoEx}.

The equality \eqref{FaEq} expresses the matrix function \(\mathscr{E}_N(t;\alpha,\beta)\) as a linear combination of the matrix
functions \(e^{t\alpha\mu\sigmas}\) with \emph{non-negative} coefficients,
which depend on \(\beta\):
\begin{equation}
\label{IRN}
\mathscr{E}_N(t;\alpha,\beta)=\int\limits_{\mu\in[-1,1]}
e^{t\alpha\mu\sigmas}\,\rho_N(d\mu),
\end{equation}
where
\begin{subequations}
\label{RemN}
\begin{gather}
\label{RemNa}
\rho_N(d\mu)=\sum\limits_{0\leq l\leq{}N/2}\rho_{N,l}(d\mu),\\
\label{RemNb}
\rho_{N,0}(d\mu)=\delta(\mu-1)\,d\mu, \ \
\rho_{N,l}(d\mu)=\tfrac{\beta^{2l}}{N^{2l}}\hspace*{-2.0ex}
\sum\limits_{p_1,p_2,\,\ldots\,,p_{2l}}\hspace*{-2.0ex}
\delta(\mu-\mu_{p_1,p_2,\,\ldots\,,p_{2l};N})\,d\mu,
\end{gather}
\(\delta(\mu)\) is the Dirak \(\delta\)-function,
the summation in \eqref{RemNb} runs over all sets of integers \(p_1,p_2,\,\ldots\,,p_{2l}\) satisfying the conditions \eqref{Adm} with \(m=2l\),
the numbers \(\mu_{p_1,p_2,\,\ldots\,,p_{2l};N}\) are the same that in
\eqref{PoEx}.
\end{subequations}
In view of \eqref{DME}, the matrix-function \(\mathscr{E}_N(t;\alpha,\beta)\)
is diagonal:
 \begin{equation}
\label{diN}
\mathscr{E}_{N}(t;\alpha,\beta)=
\begin{bmatrix}
e_{1,N}(t;\alpha,\beta)&0\\
0&e_{2,N}(t;\alpha,\beta)
\end{bmatrix}.
\end{equation}
The diagonal entries \(e_{1,N}(t;\alpha,\beta),\,e_{2,N}(t;\alpha,\beta)\)
are representable as
\begin{equation}
\label{RDEN}
e_{1,N}(t;\alpha,\beta)=\hspace*{-1.5ex}
\int\limits_{\mu\in[-1,1]}\hspace*{-1.5ex}e^{t\alpha\mu}\rho_N(d\mu),
\ \ e_{2,N}(t;\alpha,\beta)=\hspace*{-1.5ex}
\int\limits_{\mu\in[-1,1]}\hspace*{-1.5ex}e^{-t\alpha\mu}\rho_N(d\mu).
\end{equation}
According to Theorem \ref{RepTe}, each of the functions \(e_{1,N}(t;\alpha,\beta),\,e_{2,N}(t;\alpha,\beta)\)
is exponentially convex. Their sum, which is the trace of the matrix
\(\mathscr{E}_{N}(t;\alpha,\beta)\), is exponentially convex.
In view of \eqref{MeLR}, the function \(\tr \mathscr{E}(t;\alpha,\beta)\)
is exponentially convex. The reference to \eqref{AuMa} completes the proof
of Theorem~\ref{MaTe}.
\end{proof}
\begin{rem}
\label{LUB}
For each \(\beta\geq0\), the family of the measures
\(\bigl\{\rho_N(d\mu)\bigr\}_N\) is uniformly
bounded with respect to \(N\):
\begin{equation}
\label{ube}
\int\limits_{\mu\in[-1,1]}\rho_N(d\mu)\leq e^{\beta}.
\end{equation}
Indeed, for each \(N\), the cardinality of the set of integers \(p_1,p_2,\,\ldots\,,p_{m}\) satisfying the
conditions \eqref{Adm} is equal to \(\binom{N}{m}=\frac{N!}{m!(N-m)!}\cdot\) According to
\eqref{RemNb},
\begin{equation*}
\int\limits_{\mu\in[-1,1]}\rho_{N,l}(d\mu)=\binom{N}{2l}\frac{\beta^{2l}}{N^{2l}},\ \
\forall l:\,0\leq 2l\leq N.
\end{equation*}
Taking into account \eqref{RemNa}, we obtain
\begin{equation*}
\int\limits_{\mu\in[-1,1]}\hspace*{-1.0ex}\rho_{N}(d\mu)=\hspace*{-1.5ex}
\sum\limits_{l:0\leq 2l\leq N}\binom{N}{2l}\tfrac{\beta^{2l}}{N^{2l}}<\hspace*{-1.0ex}
\sum\limits_{0\leq k\leq N}\binom{N}{k}\biggl(\frac{\beta}{N}\biggr)^k\!
\!=\Bigl(1+\tfrac{\beta}{N}\Bigr)^N<e^{\beta}.
\end{equation*}
\end{rem}
\section{A Theorem on the integral representation of a \(2\times2\) matrix function}
\begin{thm}
\label{TIR}
Let \(\beta\) be a non-negative, \(\sigmal\) and \(\taul\) be the Pauli
matrices which were defined in \eqref{CF}.
 For each \(\beta\geq0\), let \(\mathscr{E}(t;\beta)\) be the
matrix function of the variable \(t\in\mathbb{R}\) which is defined by the equality
\begin{equation}
\label{dE}
\mathscr{E}(t;\beta)=\frac{e^{t\sigmas+\beta\taus}+e^{t\sigmas-\beta\taus}}{2}\cdot
\end{equation}
(The value \(\beta\) is considered as a parameter.)

Then there exists a non-negative \emph{scalar} measure \(\rho(d\mu)\) supported on the interval
\([-1,1]\)  such that the integral representation
\begin{equation}
\label{IR}
\mathscr{E}(t;\beta)=\int\limits_{\mu\in[-1,1]}e^{t\mu\sigmas}\rho(d\mu), \ \ \forall t\in\mathbb{R}.
\end{equation}
holds. The  measure \(\rho\) admits the estimate
\begin{equation}
\label{Er}
\int\limits_{\mu\in[-1,1]}\rho(d\mu)\leq e^{\beta}.
\end{equation}
\end{thm}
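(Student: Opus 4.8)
The plan is to realize $\rho$ as a weak-$*$ limit of the measures $\rho_N$ already built in the proof of Theorem~\ref{MaTe}. First I would specialize that construction to $\alpha=1$: the matrix function $\mathscr{E}(t;\beta)$ of \eqref{dE} is precisely $\mathscr{E}(t;1,\beta)$ in the notation of \eqref{plmi}, so \eqref{IRN} furnishes the finite-$N$ representation $\mathscr{E}_N(t;1,\beta)=\int_{[-1,1]}e^{t\mu\sigmas}\,\rho_N(d\mu)$ with each $\rho_N$ a non-negative measure on $[-1,1]$, while \eqref{MeLR} gives $\mathscr{E}(t;\beta)=\lim_{N\to\infty}\mathscr{E}_N(t;1,\beta)$ for every $t$. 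The essential input is the uniform mass bound $\int_{[-1,1]}\rho_N(d\mu)\le e^{\beta}$ of Remark~\ref{LUB}, equation~\eqref{ube}, which confines the entire family $\{\rho_N\}$ to the set of non-negative measures on $[-1,1]$ of total mass at most $e^{\beta}$.

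Second, because $[-1,1]$ is compact and $C[-1,1]$ is separable, the weak-$*$ topology on this norm-bounded set of measures is metrizable, so by the Helly selection theorem (equivalently, Banach--Alaoglu) I can extract a subsequence $\rho_{N_k}$ converging weak-$*$ to a non-negative measure $\rho$ supported on $[-1,1]$. Testing against the constant function $1$, which is continuous on the compact interval, yields $\int_{[-1,1]}\rho(d\mu)=\lim_{k}\int_{[-1,1]}\rho_{N_k}(d\mu)\le e^{\beta}$, which is exactly the estimate \eqref{Er}.

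Third, I would pass to the limit inside the integral. Fix $t\in\mathbb{R}$. By \eqref{DME} the matrix $\mathscr{E}_N(t;1,\beta)$ is diagonal with entries $\int_{[-1,1]}e^{\pm t\mu}\,\rho_N(d\mu)$, and for this fixed $t$ both $\mu\mapsto e^{t\mu}$ and $\mu\mapsto e^{-t\mu}$ are continuous on $[-1,1]$. Hence weak-$*$ convergence along $\{N_k\}$ gives
\[
\int_{[-1,1]}e^{t\mu\sigmas}\,\rho_{N_k}(d\mu)\ \longrightarrow\ \int_{[-1,1]}e^{t\mu\sigmas}\,\rho(d\mu),
\]
while the left-hand side also tends to $\mathscr{E}(t;\beta)$ by \eqref{MeLR}, being a subsequence of a convergent sequence. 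Equating the two limits establishes \eqref{IR} for this $t$; since the subsequence $\{N_k\}$ is chosen once and for all whereas $t$ was arbitrary, \eqref{IR} holds simultaneously for every $t\in\mathbb{R}$.

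The only delicate point—which I regard as the main, if modest, obstacle—is ensuring that one and the same measure $\rho$ serves all $t$ at once. This is why the argument selects the subsequence first, using solely the $t$-independent bound \eqref{ube}, and only afterward applies the continuous test functions $e^{\pm t\mu}$ for each individual $t$. The compactness of the support interval $[-1,1]$ is what prevents any mass from escaping in the limit, so the resulting $\rho$ is a genuine finite measure supported where the theorem asserts.
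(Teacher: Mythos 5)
Your proposal is correct and follows essentially the same route as the paper: specializing \eqref{IRN} to \(\alpha=1\), invoking the uniform mass bound \eqref{ube} to get weak compactness of the family \(\{\rho_N\}\), and passing to a weak limit point via \eqref{MeLR} to obtain \eqref{IR} and \eqref{Er}. Your write-up merely makes explicit the details the paper leaves implicit (metrizability, subsequence extraction once and for all, testing against the constant function and against \(e^{\pm t\mu}\) for each fixed \(t\)).
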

\begin{proof}
We start from the integral representation \eqref{IRN}, where we can set \(\alpha=1\). The inequality
\eqref{ube} means that for each \(\beta\), the family of measures \(\bigl\{\rho_N\bigr\}\)
is bounded with respect to \(N\). Therefore the family of measures \(\bigl\{\rho_N\bigr\}\)
is weakly compact. From \eqref{IRN} and \eqref{MeLR} it follows that representation \eqref{IR}
holds with every measure \(\rho\) which is a weak limiting point of the family \(\bigl\{\rho_N\bigr\}\).
Actually such \(\rho\) is unique.
\end{proof}
\begin{rem}
\label{MFE}
The measure \(\rho(d\mu)\) which appears in the integral representation \eqref{IR} can be presented explicitly. The matrix-function \(\mathscr{E}(t;\beta)\) is diagonal:
\begin{equation}
\label{DM}
\mathscr{E}(t;\beta)=
\begin{bmatrix}
e_{1}(t;\beta)&0\\
0&e_2(t;\beta)
\end{bmatrix}\cdot
\end{equation}
From \eqref{dE} we find that
\begin{subequations}
\label{EE}
\begin{align}
\label{EEa}
e_{1}(t;\beta)&=\cosh\sqrt{t^2+\beta^2}+t\cdot\frac{\sinh\sqrt{t^2+\beta^2}}{\sqrt{t^2+\beta^2}}\ccomma\\
\label{EEb}
e_{2}(t;\beta)&=\cosh\sqrt{t^2+\beta^2}-t\cdot\frac{\sinh\sqrt{t^2+\beta^2}}{\sqrt{t^2+\beta^2}}\cdot
\end{align}
\end{subequations}
The function \(\cosh\sqrt{t^2+\beta^2}\) admits the integral representation
\begin{equation}
\label{ira}
\cosh\sqrt{t^2+\beta^2}=\cosh t+\int\limits_{\mu\in[-1,1]}\widehat{d}(\mu,\beta)e^{\mu t}\,d\mu,
\end{equation}
where
\begin{equation}
\label{ILT}
\widehat{d}(\mu,\beta)=\frac{\beta}{2\sqrt{1-\mu^2}}I_1(\beta\sqrt{1-\mu^2}),
\ \ -1\leq\mu\leq1.
\end{equation}
\(I_1(\,.\,)\) is the modified Bessel function. The appropriate calculation can be found in
\cite[Section 3]{Ka}, in particular Lemma 3.2 there.
From \eqref{EE}, \eqref{ira} and \eqref{ILT} we obtain the following expression for
 the measure \(d\rho(\mu)\) from \eqref{IR}:
 \begin{equation}
 \label{fem}
 \rho(d\mu)=\delta(\mu-1)\,d\mu+(1+\mu)\widehat{d}(\mu,\beta)\,d\mu.
 \end{equation}
\end{rem}
\newpage


\begin{thebibliography}{1111}
\bibitem[A]{A} Н.И.\,Ахиезер. \textit{Классическая проблема моментов.} Физматгиз, Москва, 1965.
(In Russian). English Transl.:\\
N.I.\,Akhiezer. \textit{The Clasical Moment Problem.} Oliver and Boyd, Edinburgh and London, 1965.
\bibitem[Ber1]{Ber1} S.N.\,Bernstein. \textit{Sur les functions absolument monotones.} Acta Math. \textbf{52} (1928), 1 - 66. (In French). Russian translation:\\
   С.Н.\,Бернштейн. \textit{Абсолютно монотонные функции}. In: \cite{Ber2}, 370 - 425.
 \bibitem[Ber2]{Ber2}   С.Н.\,Бернштейн. \textit{Собрание Сочинений. Том 1.} Издательство АН СССР,
 1952.
\bibitem[BMV]{BMV} D.\,Bessis, P.\,Moussa,\,M.\,Villani. \textit{Monotonic converging variational approximations to the functional integrals in quantum statistical mechanics.} J. Mat. Phys.,
\textbf{16}:11 (1975), 2318 - 2325.
\bibitem[E1]{E1} A.\,Eremenko. \textit{Herbert Stahl's proof of the BMV conjecture.} arXiv:1312.6003.
\bibitem[E2]{E2} А.Э.Еременко. \textit{Доказательство Герберта Шталя гипотезы БМВ.}  Матем. Сб., \textbf{206}:1
(2015), 97-102. (In Russian). English translation:\\
A.\,Eremenko. \textit{Herbert Stahl's proof of the BMV conjecture.}
Sbornik:\ Mathematics, \textbf{206}:1 (2015),
87-92.
\bibitem[Ha]{Ha} B.C.\,Hall. \textit{Lie Groups,\,Lie Algebras and
Representations.} Springer-Verlag, New York, 2003.
\bibitem[Ka]{Ka} V.\,Katsnelson. \textit{The function \(\cosh\sqrt{at^2+b}\) is exponentially convex.}\\
arXiv: 1502.04201.
\bibitem[MK]{MK} \textsc{Mehta,\,M.L., Kumar,\,K.} \textit{On an integral representation of the function
\(\textup{Tr}\,[e^{A-\lambda B}]\).} J.Phys.\,A: Math. Gen.,\,\textbf{9}, no.2 (1976), 197--206.
\bibitem[S1]{S1} H.\,Stahl. \textit{Proof of the BMV conjecture.} arXiv:1107.4875v1, 1-56, 25 Jul2011.
\bibitem[S2]{S2} H.\,Stahl. \textit{Proof of the BMV conjecture.} arXiv:1107.4875v3, 1-25, 17 Aug2012.
\bibitem[S3]{S3} H.\,Stahl. \textit{Proof of the BMV conjecture.} Acta Math.,
\textbf{211} (2013), 255-290.
\bibitem[Wid]{Wid} D.V.\,Widder. \textit{Laplace Transform.} Princeton Univ. Press, Princeton N.J., 1946.
\end{thebibliography}
\end{document}